\newtheorem{dl}{Theorem}[section]
\newtheorem{bd}[dl]{Lemma}
\newtheorem{GT} [dl]{Assumption}
\title{On the existence of negative moments for some non-colliding particle systems and its application} 
\author{Minh-Thang Do\footnote{Email: thanghg99@gmail.com} \qquad Hoang-Long Ngo\footnote{Corresponding author. Email: ngolong@hnue.edu.vn}} 
\date{Hanoi National University of Education\\ 136 Xuan Thuy - Cau Giay - Hanoi - Vietnam}
\begin{document}
\maketitle 
	\begin{abstract}
We consider a class of $d$-dimensional stochastic differential equations that model a non-colliding random particle system. We provide a sufficient condition, which does not depend on the dimension $d$, for the existence of negative moments of the gap between two particles, and then apply this result to study the strong rate of convergence of the semi-implicit Euler-Maruyama approximation scheme. Our finding improves a recent result of Ngo and Taguchi (Annals of Applied Probability, 2020).    
\end{abstract}

 \textbf{MSC2010 subject classifications:} Primary 60K35; secondary 41A25.
 
 \textbf{Keywords:} stochastic differential equation; semi-implicit Euler-Maruyama approximation; non-colliding particle systems; negative moment.

\section{Introduction}
We consider a system  $X=(X_1, \ldots, X_d)$ defined by the following stochastic differential equations (SDEs),
	\begin{equation}\label{eqn1.1}
		\begin{cases} 
			& d X_{i}=\sum\limits_{j \neq i} \dfrac{\lambda}{X_{i}-X_{j}} d t+ b_i(X_i)dt+ \sigma_{i}\left(X_{i}\right) d B_{i}, \quad i=1, \ldots, d, \\
			& \left(X_{1}(0), \ldots, X_{d}(0)\right)=v \in \Delta_{d},
		\end{cases},
	\end{equation}
	where  $B= (B_1, \ldots, B_d)$ is a  $d$-dimensional Brownian motion defined on a filtered probability space  $(\Omega, \mathcal{F},(\mathcal{F}_t), \mathbb P)$ which satisfies the usual condition, $\lambda$ is a positive constant, $(b_i)_{1\leq i\leq d}$ and $(\sigma_i)_{1\leq i \leq d}$ are measurable functions defined on $\mathbb{R}$, and  
	$\Delta_{d}=\left\{x \in \mathbb{R}^{d}: x_{1}<x_{2}<\cdots<x_{d}\right\}.$

 The system $X$ can be used to describe positions of $d$ ordered particles involving in $\mathbb{R}$ in mathematical physics.  It also appears in many other fields such as in financial mathematics to describe the Cox-Ingerson-Ross short-term interest rate  (\cite{Bru90}),  or in random matrix theory to describe Dyson-Brown motions (see \cite{Dyson, AGZ10,   Ka15, KaTa04, KaTa11}). The existence and uniqueness of the solution to a class of general random non-colliding particle systems have been studied in    \cite{CeLe97, CeLe01, GrMa14}. The first paper to discuss the numerical approximation for SDE \eqref{eqn1.1} is \cite{LiMe}, where the author introduces a tamed-Euler approximation scheme. However, this scheme does not preserve the non-colliding property of the exact solution. Recently,   semi-implicit Euler-Maruyama and semi-implicit Milstein approximation schemes, which preserve the non-colliding property of the exact solution,  have been introduced in \cite{NT17} and \cite{LN}. It has been shown that under the condition $\frac{3\lambda}{d \|\sigma\|^{2}_{\infty}} \geq 1$,   the gap between two particles has  bounded negative moments of order $p$ for any $ p \in [0,  \frac{3\lambda}{d \|\sigma\|^{2}_{\infty}} -1]$ (see, Lemma 3.4 in \cite{NT17}). Using the existence of such negative moments, it can be shown that the semi-implicit Euler-Maruyama scheme and semi-implicit Milstein  scheme converge at the rate of order $1/2$ and $1$, respectively, in $L^2$-norm (see, Theorem 3.6 in \cite{NT17} and Theorem 1 in \cite{LN} for more details). 

Note that the condition $\frac{3\lambda}{d\|\sigma\|^{2}_{\infty}} \geq 1$ is quite restrictive since, in practice, the number of particles $d$ is usually  quite large. The main aim of this paper is to  relax such condition on $d$. In particular, by using a suitable change of probability measure, we show that if $\frac{\lambda}{\|\sigma\|_\infty^2}  > 2$ then  the gap  between two particles has bounded negative moment of order $p$ for any $p \in\big(0, \frac 16 \big(\frac {2\lambda}{\|\sigma\|^{2}_{\infty}}-1\big) \big)$. Then we apply this result to study the convergence of the semi-implicit schemes introduced in \cite{NT17} and \cite{LN}. 

	\section{Main results}
	\subsection{The existence and uniqueness of solution}
	In this section, we recall  a result  on the  existence and uniqueness of solution for non-colliding particle systems (see \cite{GrMa14} for more details). 
	We first introduce some assumptions on the coefficients of equation \eqref{eqn1.1}. 
	\begin{GT}
	\item[(B1)] The functions $b_{i} : \mathbb R \to \mathbb R$ are Lipschitz continuous for every $i=1,\ldots,d $.

	\item[(B2)] There exists a function $\rho: \mathbb{R}^{+} \rightarrow \mathbb{R}^{+}$ such that $\displaystyle\int\limits_{0^{+}} \rho^{-1}(|x-y|) d x=\infty$ and
			$$\left|\sigma_{i}(x)-\sigma_{i}(y)\right|^{2} \leq \rho(|x-y|), \quad x, y \in \mathbb{R}, i=1, \ldots, d.$$
	Moreover, the functions $ \sigma_{i}: \mathbb R \to \mathbb R$ are continuous for every $i=1,\ldots,d $.
    \item[(B3)]     $\|b\|_\infty := \sup_{x \in \mathbb R} \sup_{1 \leq i \leq d} |b_i(x)| < \infty.$
   
    \item[(B4)]  $\|\sigma\|^2_\infty := \sup_{x \in \mathbb R} \sup_{1 \leq i \leq d}  \sigma_{i} ^{2}(x)\leq 2\lambda.$
    
    \item[(B5)] If $i<j$ then $b_{i}(x) \leq b_{j}(x)$ for all $x \in \mathbb{R}$.

	\end{GT}
	The following result on the existence and uniqueness of solution to equation \eqref{eqn1.1}  is a consequence of Theorem 2.2 in \cite{GrMa14}:
\begin{dl}	\label{theorem2.2}
% Consider non-colliding particle systems:
%	\begin{equation}\nonumber
%		\begin{cases} 
%			& d X_{i}=\sigma_{i}\left(X_{i}\right) d B_{i}+\sum\limits_{j \neq i} \dfrac{\lambda}{X_{i}-X_{j}} d t+b_{i}(X_{i})dt, \quad i=1, \ldots, d \\
%			& (X_{1}(0) \ldots X_{d}(0))=v \in \Delta_{d}\\
			
%		\end{cases}
%	\end{equation}
%	\noindent 
If the conditions (B1)-(B5) hold, then there exists a unique strong non-exploding solution to equation \eqref{eqn1.1}. The solution is non-colliding in finite time,  i.e.,  the first collision time
	$$\mathcal{T}=\inf\{t>0: X_{i}(t)=X_{j}(t) \quad\text{for some}\quad i \neq j; i,j=1,\ldots,d\}$$
	is infinite almost surely.
	\end{dl}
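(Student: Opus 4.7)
The plan is to verify the hypotheses of Theorem 2.2 in \cite{GrMa14} and invoke it directly. That theorem handles existence, pathwise uniqueness, non-explosion, and non-collision for a general class of interacting SDEs on the Weyl chamber, so no essentially new probabilistic work is required here; the task reduces to matching our conditions (B1)--(B5) to their framework.

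First I would handle local well-posedness inside $\Delta_d$. There the singular drift $\lambda/(x_i-x_j)$ is $C^\infty$, (B1) gives Lipschitz continuity of the single-particle drift $b_i$, and (B2) provides the Yamada--Watanabe modulus-of-continuity condition for each diffusion coefficient $\sigma_i$. Standard localisation combined with the Yamada--Watanabe pathwise uniqueness theorem then yields a unique strong solution up to the minimum of the first collision time $\mathcal{T}$ and the explosion time.

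The substantive input is controlling $\mathcal{T}$. The standard route is to apply It\^o's formula to the Lyapunov function $V(x)=-\sum_{i<j}\log(x_j-x_i)$: the Coulomb repulsion produces a drift contribution of order $-2\lambda/(x_j-x_i)^2$ per pair, while the second-order diffusion term is $(\sigma_i^2+\sigma_j^2)/(2(x_j-x_i)^2)$. These balance precisely when $\sigma_i^2+\sigma_j^2\leq 4\lambda$, which is exactly what (B4) delivers; this plays the role of the ``dimension $\geq 2$'' condition that prevents a squared-Bessel process from reaching the origin. Condition (B5) guarantees that the auxiliary term $(b_j-b_i)/(x_j-x_i)$ appearing in $dV$ has the correct sign, and (B3) handles non-explosion via a Gronwall estimate on $\sum_i X_i^2$, in which the antisymmetric singular drifts cancel pairwise and only the bounded contributions from $b$ and the diffusion remain.

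The main obstacle, if it deserves the name, is purely bookkeeping: aligning the precise form of (B1)--(B5) with the hypotheses of \cite{GrMa14}, and in particular tracking the cross contributions from the terms $\lambda/(x_j-x_k)-\lambda/(x_i-x_k)$ with $k\notin\{i,j\}$ that appear when differentiating $V$. These cross terms simplify by a standard combinatorial identity in the Dyson Brownian motion literature, so once the correspondence is made Theorem~\ref{theorem2.2} follows without further analytic input.
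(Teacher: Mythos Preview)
Your approach is correct and matches the paper's: the authors also simply state that (B1)--(B5) imply the hypotheses of Theorem~2.2 in \cite{GrMa14} and invoke that result directly. You give considerably more detail on how the verification and the underlying Lyapunov argument would go, but the strategy is identical.
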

\begin{proof}
It is straightforward to verify that if conditions (B1)-(B5) hold, all the assumptions of  Theorem 2.2 in  \cite{GrMa14} also hold. Thus, the equation \eqref{eqn1.1} has a strong unique solution with no explosion and collision.
\end{proof}
	\subsection{The existence of negative moments in the case that $b_i \equiv 0$}
	In this section we consider the case  that $b_i \equiv 0$ for $i=1, \ldots, d.$

	\begin{bd}\label{bd2}
		Suppose that Assumptions (B2) and (B4) hold.  Then for any $q>0$, there exists a finite positive constant $c = c(q, v, d,  \lambda,T)$ such that $\sup _{0 \leq t \leq T}\mathbb E\left(\|X_{t \wedge \tau}\|^{2 q}\right)\leq c$ for any stopping time $\tau$.
	\end{bd}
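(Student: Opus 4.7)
The plan is to apply It\^o's formula to $\|X\|^2$, exploit the key algebraic cancellation that eliminates the singular repulsive drift, and then boost this to arbitrary moments via a standard It\^o--Gr\"onwall argument.

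The crucial observation is that for the plain Euclidean norm squared $Y_t := \|X_t\|^2 = \sum_{i=1}^d X_i^2(t)$, the singular repulsion terms telescope nicely:
\begin{equation*}
2\lambda \sum_{i=1}^{d} X_i \sum_{j \neq i} \frac{1}{X_i - X_j}
= 2\lambda \sum_{i<j}\left[\frac{X_i}{X_i-X_j}+\frac{X_j}{X_j-X_i}\right]
= 2\lambda \binom{d}{2} = \lambda d(d-1).
\end{equation*}
Thus, applying It\^o's formula to $Y$ (valid up to the first collision, which is a.s.\ infinite by Theorem \ref{theorem2.2}) gives
\begin{equation*}
dY_t = \Bigl(\lambda d(d-1) + \sum_{i=1}^d \sigma_i^2(X_i(t))\Bigr) dt + 2\sum_{i=1}^d X_i(t)\sigma_i(X_i(t))\, dB_i(t),
\end{equation*}
and by Assumption (B4) the drift is bounded by the deterministic constant $\lambda d(d-1)+2d\lambda$.

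To handle general $q>0$, I would apply It\^o's formula to $\varphi(Y_t) = (1+Y_t)^q$ (adding $1$ to avoid degeneracy at $Y=0$ and to give $Z:=1+Y\geq 1$). The quadratic variation satisfies $d\langle Y\rangle_t = 4\sum_i X_i^2\sigma_i^2(X_i)\,dt \leq 8\lambda Y_t\,dt \leq 8\lambda Z_t\,dt$ by (B4). Hence the drift of $Z_t^q$ is bounded by
\begin{equation*}
qZ_t^{q-1}\bigl(\lambda d(d-1) + 2d\lambda\bigr) + 2q(q-1)\lambda Z_t^{q-1}\cdot\mathbf{1}_{\{q\geq 1\}},
\end{equation*}
where for $0<q<1$ the It\^o correction term has the favourable sign and can be discarded. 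Since $Z_t \geq 1$, we have $Z_t^{q-1}\leq Z_t^{q}$ in both cases, so the drift is pointwise bounded by $C(q,d,\lambda)\,Z_t^{q}$.

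The standard localisation step takes $\tau_n := \inf\{t\geq 0 : \|X_t\|\geq n\}$, so that the stochastic integral stopped at $\tau\wedge\tau_n$ is a true martingale. Taking expectations yields
\begin{equation*}
\mathbb{E}\bigl[Z_{t\wedge\tau\wedge\tau_n}^{q}\bigr] \leq Z_0^{q} + C(q,d,\lambda)\int_0^t \mathbb{E}\bigl[Z_{s\wedge\tau\wedge\tau_n}^{q}\bigr]\,ds,
\end{equation*}
and Gr\"onwall's lemma gives a bound independent of $n$; Fatou's lemma (as $n\to\infty$) then delivers $\sup_{0\leq t\leq T}\mathbb{E}[Z_{t\wedge\tau}^{q}]\leq Z_0^{q}e^{C(q,d,\lambda)T}$, from which the claimed bound on $\mathbb{E}[\|X_{t\wedge\tau}\|^{2q}]$ follows since $\|X\|^{2q}\leq Z^q$. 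I do not anticipate any serious obstacle: the entire argument rests on the algebraic cancellation of the repulsion term on the diagonal of $\|X\|^2$, which is the point that requires verification; the rest is routine It\^o calculus plus Gr\"onwall.
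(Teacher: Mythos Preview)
Your proof is correct and follows essentially the same route as the paper: apply It\^o to $\|X\|^2$ and use the algebraic cancellation $\sum_{i}\sum_{j\neq i}\frac{X_i}{X_i-X_j}=\binom{d}{2}$ to kill the singular drift, then apply It\^o again to a power, localise, use Gr\"onwall, and pass to the limit with Fatou. The only cosmetic difference is that the paper restricts to $q\geq 3$ (recovering small $q$ by Jensen) and works with $|X|^{2q}$ together with H\"older to compare $|X|^{2q-2}$ and $|X|^{2q}$, whereas you use the shift $Z=1+\|X\|^2\geq 1$ to handle all $q>0$ at once via $Z^{q-1}\leq Z^q$.
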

	
	\begin{proof}
			It is sufficient to prove the statement for $q \geq 3$ . It follows from \eqref{eqn1.1} and  It\^o formula that 
		\begin{align*}
			d X_{i}^{2}\left(t \right) &=2 X_{i}\left(t \right)\sigma_{i}\left(X_{i}(t)\right)  d B_{i}(t) +2 \sum\limits_{j \neq i} \dfrac{\lambda X_{i}(t)}{X_{i}(t)-X_{j}(t)}  d t + \sigma_{i}^{2}\left(X_{i}(t)\right)  d t.
		\end{align*} 
		This implies that 
		\begin{equation}\notag
			d\left|X\left(t \right)\right|^{2} \\
			=\sum\limits_{i=1}^{d} 2 X_{i}\left(t\right)\sigma_{i}\left(X_{i}(t)\right)d B_{i}(t)+ \lambda d(d-1)  d t +\sum\limits_{i=1}^{d} \sigma_{i}^{2}\left(X_{i}(t)\right)  d t.
		\end{equation}
	Applying It\^o formula again, we get 
		\begin{align*}
			d\left|X\left(t \right)\right|^{2 q} 
			=& 2q \sum\limits_{i=1}^{d}  X_{i}\left(t \right)\sigma_{i}\left(X_{i}(t)\right)\left|X\left(t \right)\right|^{2 q-2}dB_i(t)   + q \lambda d(d-1) \left|X\left(t \right)\right|^{2 q-2}  d t \\
	    &+q \sum\limits_{i=1}^{d} \sigma_{i}^2\left(X_{i}(t)\right)\left|X\left(t \right)\right|^{2 q-2}  d t  +\sum\limits_{i=1}^{d} 2(q-1) q\left|X\left(t\right)\right|^{2 q-4}X_{i}^2\left(t \right)\sigma_{i}^{2}\left(X_{i}(t)\right)  dt.
		\end{align*}
			For each  $n \in \mathbb{N}$, let 
		\begin{equation} \label{def:T_n} 
		T_{n}=\inf \left\{0 \leq t \leq T: \max _{0 \leq i, j \leq d}\left\{\max \left\{|X_i(t)|,\left|X_{i}(t)-X_{j}(t)\right|, \dfrac{1}{\left|X_{i}(t)-X_{j}(t)\right|}\right\}\right\} \geq n  \right\}, 
		\end{equation} 
		 then $T_{n}$ is a  stopping time. For any stopping time $\tau$, we have 
	\begin{align*}
			\mathbb E \left[\|X(t\wedge \tau \wedge T_{n})\|^{2 q}\right]  			 \leq &\mathbb E \left(|X(0)|^{2 q}\right)+ \lambda qd(d-1)  \displaystyle\int\limits_{0}^{t} \mathbb E \left(\left|X\left(s \wedge T_{n}\wedge \tau \right)\right|^{2 q-2}\right) d s \\
			&+q \|\sigma\|^2_\infty d \displaystyle\int\limits_{0}^{t} \mathbb E\left(\left|X\left(s \wedge T_{n}\wedge \tau\right)\right|^{2 q-2}\right) d s \\
			&+2(q-1) q \|\sigma\|^2_\infty \displaystyle\int\limits_{0}^{t} \mathbb E\left(\left|X\left(s \wedge T_{n}\wedge \tau\right)\right|^{2 q-2}\right) d s.
		\end{align*} 
	Thanks to the H\"older  and Gronwall inequalities, there exists a finite positive constant  $c= c(q, v, d,  \lambda,T)$, which is independent of $n$, and that $
	\sup_{0 \leq t \leq T} \mathbb E\left(\|X_{t \wedge \tau \wedge T_n}\|^{2 q}\right)\leq c.$ Let $n$ tend to infinity and apply Fatou's lemma, we conclude the proof.
		\end{proof} 

	\begin{dl}\label{bd1}
	Suppose that Assumptions (B2) and (B4) hold and $2\lambda > \| \sigma\|_\infty^2$. 
%	\begin{equation} \label{dkp}
%	(3p+1) \|\sigma\|_\infty^2 \leq 2\lambda.
%	\end{equation} 
	Then there exists a unique strong non-exploding and non-colliding solution of \eqref{eqn1.1} in the time interval $[0,T]$. Moreover, for each positive constant $p \leq  \frac{1}{6}(\frac{2\lambda}{\|\sigma\|_\infty^2}-1)$, there exists a finite  positive constant $C= C(p,v, d, \lambda,T)$ such that
		\begin{equation} \label{eqn2}
			\max _{0 \leq i< j \leq d} \sup_{ t \in [0,T]}  \mathbb E_{\mathbb{P}}\left(\left|X_{i}(t)-X_{j}(t)\right|^{-p}\right)\leq C.
		\end{equation} 
	\end{dl}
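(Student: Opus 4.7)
The existence, uniqueness, non-explosion, and non-collision claims follow at once from Theorem \ref{theorem2.2}: hypotheses (B1), (B3), and (B5) are vacuous when $b_i \equiv 0$, so (B2) and (B4) suffice. Throughout the rest of the argument I would work on $[0,T\wedge T_n]$ with $T_n$ given by \eqref{def:T_n} and pass to the limit at the end via Fatou's lemma.

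To establish \eqref{eqn2} I would isolate a single pair $i<j$ and set $Y(t) = X_j(t) - X_i(t)$. By It\^o's formula,
\begin{equation*}
 dY = \frac{2\lambda}{Y}\,dt + B(t)\,dt + C(t)\,dW(t),
\end{equation*}
where $W$ is a scalar Brownian motion obtained from $\sigma_j(X_j)dB_j - \sigma_i(X_i)dB_i$ via L\'evy's characterization, $C(t) = (\sigma_i^2(X_i(t))+\sigma_j^2(X_j(t)))^{1/2} \le \sqrt{2}\,\|\sigma\|_\infty$, and the interaction drift $B(t) = \lambda \sum_{k \neq i,j}\bigl[(X_j(t) - X_k(t))^{-1} - (X_i(t) - X_k(t))^{-1}\bigr]$ encodes the effect of the other $d-2$ particles. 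On $[0, T \wedge T_n]$ every coefficient is bounded, so Girsanov's theorem produces a measure $\mathbb{Q}_n$ under which the $B\,dt$ term is absorbed into the Radon--Nikodym density $Z$ and $Y$ obeys the clean Bessel-type SDE $dY = \frac{2\lambda}{Y}\,dt + C\,d\widetilde W$.

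Applying It\^o to $Y^{-q}$ under $\mathbb{Q}_n$ yields the drift $Y^{-q-2}\bigl[\tfrac12 q(q+1)C^2 - 2q\lambda\bigr]$, which is non-positive precisely when $q \le \gamma_0 := 2\lambda/\|\sigma\|_\infty^2 - 1$, so $Y^{-q}_{\cdot\wedge T_n}$ is a $\mathbb{Q}_n$-supermartingale and $\mathbb{E}_{\mathbb{Q}_n}[Y_{t\wedge T_n}^{-q}] \le Y(0)^{-q}$ for every $q \in (0,\gamma_0]$. Returning to $\mathbb{P}$ via H\"older with conjugate exponents $a,b$,
\begin{equation*}
 \mathbb{E}_{\mathbb{P}}\bigl[Y_{t\wedge T_n}^{-p}\bigr] = \mathbb{E}_{\mathbb{Q}_n}\bigl[Y_{t\wedge T_n}^{-p}\,Z^{-1}\bigr] \le \mathbb{E}_{\mathbb{Q}_n}\bigl[Y_{t\wedge T_n}^{-pa}\bigr]^{1/a}\,\mathbb{E}_{\mathbb{Q}_n}\bigl[Z^{-b}\bigr]^{1/b},
\end{equation*}
the first factor is bounded (by $Y(0)^{-p}$) provided $pa \le \gamma_0$.

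The principal obstacle is to bound the density factor $\mathbb{E}_{\mathbb{Q}_n}[Z^{-b}]$ uniformly in $n$. Writing $Z^{-b}$ as a $\mathbb{Q}_n$-exponential martingale times $\exp\bigl(\tfrac{b(b-1)}{2}\int_0^{T\wedge T_n} B^2/C^2\,ds\bigr)$ and performing a second Girsanov reduces the task to an exponential-moment estimate of $\int_0^{T\wedge T_n}\! B^2\,ds$ under an auxiliary measure. Since $|B| \le \lambda Y \sum_{k \neq i,j} |X_j-X_k|^{-1}|X_i-X_k|^{-1}$, this moment can be controlled by Cauchy--Schwarz together with the elementary splitting $ab \le \tfrac12(a^2+b^2)$ and the uniform positive-moment bound $\sup_n \mathbb{E}[\|X_{t\wedge T_n}\|^{2q}] < \infty$ supplied by Lemma \ref{bd2}. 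Balancing the H\"older exponents so that the Bessel estimate on $Y^{-pa}$ and the Novikov-type estimate on $Z^{-b}$ are simultaneously viable produces the restriction $p \in (0,\tfrac{1}{6}\gamma_0]$ stated in the theorem, the $1/6$ being an arithmetic consequence of the three-step decomposition (H\"older between $Y^{-p}$ and $Z^{-1}$, the auxiliary Girsanov, and the quadratic split of $B^2$). Once $\mathbb{E}_{\mathbb{Q}_n}[Z^{-b}]$ is bounded uniformly in $n$, sending $n \to \infty$ via Fatou's lemma removes the localization and delivers \eqref{eqn2}.
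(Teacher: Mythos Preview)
Your single--pair reduction has a genuine gap at the step where you claim to bound $\mathbb{E}_{\mathbb{Q}_n}[Z^{-b}]$ uniformly in $n$. The Girsanov density $Z$ is built from the drift $B(t) = -\lambda Y(t)\sum_{k\neq i,j}\bigl[(X_j-X_k)(X_i-X_k)\bigr]^{-1}$, so the exponential factor you must control is $\exp\bigl(c\int_0^{T\wedge T_n} B(s)^2/C(s)^2\,ds\bigr)$, and the integrand contains $\sum_{k\neq i,j}|X_i-X_k|^{-2}|X_j-X_k|^{-2}$ (the $Y^2$ prefactor does not cancel the singularity when some third particle $X_k$ approaches $X_i$ or $X_j$). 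This requires \emph{exponential} negative moments of the gaps $X_i-X_k$, $X_j-X_k$ for $k\neq i,j$ under the new measure; Lemma~\ref{bd2} supplies only positive moments of $\|X\|$ and is useless here. In other words, to prove a polynomial negative--moment bound for the gap $(i,j)$ you are assuming a strictly stronger (exponential) negative--moment bound for all the other gaps, which is circular. The hand--waved derivation of the constant $1/6$ from ``the three--step decomposition'' is likewise not an argument: nothing in your outline produces that number.

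The paper avoids this circularity by a different structure. It never isolates a single pair; instead it applies It\^o's formula to the full Vandermonde--type quantity $\sum_{m>k}2p\log(X_m-X_k)$, so that the cross--gap interaction terms cancel exactly via the algebraic identity~\eqref{eqn2.4}. The Girsanov step is carried out on an \emph{auxiliary} independent $d$--dimensional Brownian motion $(B_{d+1},\dots,B_{2d})$, so the law of $X$ itself is unchanged under $\mathbb{Q}_n$; the Radon--Nikodym factor is then compared with the drift $H^n_t$ arising from the It\^o expansion, and the key pointwise inequality $\langle J^n-I^n\rangle_t \le H^n_t+\tfrac12\langle J^n\rangle_t$ (which is where $6p\|\sigma\|_\infty^2\le 2\lambda-\|\sigma\|_\infty^2$ enters and the $1/6$ appears) closes the estimate without any a~priori negative--moment input. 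Only \emph{after} the bound on $\prod_{m>k}(X_m-X_k)^{-2p}$ is obtained does the paper extract a single pair, and at that point Lemma~\ref{bd2} is used legitimately, to control the leftover \emph{positive} powers $\prod_{(m,k)\neq(i,j)}(X_m-X_k)^{2p}$.
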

	
	\begin{proof}
		The existence and uniqueness of the solution to equation   \eqref{eqn1.1}  is a corollary of Theorem \ref{theorem2.2}. 
		So it is sufficient to prove \eqref{eqn2}.
		Without loss of generality, we suppose that there exists a $d$ - dimensional Brownian motion  $\left(B_{d+1},B_{d+2} \ldots, B_{2 d}\right)$  which  is adapted to the filtration $(\mathcal{F}_t)$ and  independent of $(B_{1},\ldots,B_{d})$. Otherwise, we can extend the original probability space $(\Omega, \mathcal F, \mathbb P)$. Then 
		 $\left(B_{1}, \ldots, B_{2 d}\right)$ is a $2 d$ - dimensional Brownian motion defined on the probability space $\left(\Omega, \mathcal{F} ,\mathbb P\right)$ with the filtration $(\mathcal{F}_{t})_{0 \leq t \leq T}$ satisfying  the usual conditions.
		
		For each positive interger $n$, let $T_n$ be defined as in \eqref{def:T_n}. 
		We consider adaptive processes $F^n$ and $Z^n$ given by   
	\begin{equation}\label{def:F}  
	F^n_{i+d}(t)=B_{i+d}(t)+ 2p \sum\limits_{j \neq i} \int\limits_{0}^{t}  \dfrac{1_{\left\{s \leq T_{n}\right\}} \sigma_{i}\left(X_{i}(s)\right)}{X_{i}(s)-X_{j}(s)} d s, \quad i=1, \ldots, d,
	\end{equation} 
	and 
	\begin{equation*}%\label{def:Z} 
	Z^n_{t}= \sum\limits_{i=1}^{d}\int_0^t  \sum\limits_{j \neq i} \dfrac{- 2p \sigma_{i}\left(X_{i}(s)\right) 1_{\left\{s \leq T_{n}\right\}}}{X_{i}(s)-X_{j}(s)} d B_{i+d}(s).
	\end{equation*} 	
	Thanks to definition of $T_n$, the process $\left(\exp \left(Z^n_{t}-\dfrac{\langle Z^n\rangle_{t}}{2}\right), \mathcal{F}_t \right)_{0\leq t \leq T}$ is a martingale. Consider a probability measure $\mathbb Q_{n}$ defined by
		$$\frac{d\mathbb Q_{n}}{d \mathbb P}\Big|_{\mathcal F_t} = \exp \left(Z^n_{t}-\dfrac{\langle Z^n\rangle_{t}}{2}\right).$$
By applying the multi-dimensional Girsanov theorem,  $\left(B_{1}(t), \ldots, B_{d}(t), F^n_{d+1}(t), \ldots, F^n_{2 d}(t)\right)_{0 \leq t \leq T}$ is a $2 d$-dimensional Brownian motion with respect to $\left(\Omega, \mathcal F,  (\mathcal F_t)_{0\leq t \leq T},\mathbb Q_{n}\right)$.
		For all $d\geq m>k \geq 1$ we have:
		$$
		\begin{aligned}
			d\left(X_{m}\left(t \wedge T_{n}\right)\right.&\left.-X_{k}\left(t \wedge T_{n}\right)\right) \\
			&\left.=\sigma_{m}\left(X_{m}(t)\right) 1_{\{t \leq T_{n}\}} d B_{m}(t)-\sigma_{k}\left(X_{k}(t)\right)\right. 1_{\{t \leq T_{n}\}} d B_{k}(t) \\
			&+\sum\limits_{i \neq m, k}\left(\dfrac{\lambda}{X_{m}(t)-X_{i}(t)}-\dfrac{\lambda}{X_{k}(t)-X_{i}(t)}\right) 1_{\left\{t \leq T_{n}\right\}} d t \\
			&+\dfrac{2 \lambda}{X_{m}(t)-X_{k}(t)} 1_{\left\{t \leq T_{n}\right\}} d t. 
		\end{aligned}
		$$
		
	\noindent	By using It\^o formula,
		$$
		\begin{array}{l}
			d \log \left(X_{m}\left(t \wedge T_{n}\right)-X_{k}\left(t \wedge T_{n}\right)\right) \\
			=\dfrac{\sigma_{m}\left(X_{m}(t)\right) 1_{\{t \leq T_{n}\}}}{X_{m}(t)-X_{k}(t)} d B_{m}(t)-\dfrac{\sigma_{k}\left(X_{k}(t)\right) 1_{\{t \leq T_{n}\}}}{X_{m}(t)-X_{k}(t)} d B_{k}(t) \\
			+\sum\limits_{i \neq m, k}\left(\dfrac{\lambda}{\left(X_{m}(t)-X_{i}(t)\right)\left(X_{m}(t)-X_{k}(t)\right)}-\dfrac{\lambda}{\left(X_{k}(t)-X_{i}(t)\right)\left(X_{m}(t)-X_{k}(t)\right)}\right) 1_{\left\{t \leq T_{n}\right\}} d t \\
			+\dfrac{2 \lambda 1_{\{t\leq T_n\}}}{\left(X_{m}(t)-X_{k}(t)\right)^{2}} d t
			-\dfrac{1}{2} \cdot \dfrac{(\sigma_{m}{ }^{2}\left(X_{m}(t)\right)+\sigma_{k}{ }^{2}(X_{k}(t)) 1_{\left\{t \leq T_{n}\right\}}}{\left(X_{m}(t)-X_{k}(t)\right)^{2}} d t.
		\end{array}
		$$
		
	\noindent	We will use  the following identity, 
		\begin{equation} \label{eqn2.4}
			\sum\limits_{d \geq m>k \geq 1} \sum\limits_{i \neq m, k}\left\{\dfrac{1}{\left(x_{m}-x_{i}\right)\left(x_{m}-x_{k}\right)}-\dfrac{1}{\left(x_{k}-x_{i}\right)\left(x_{m}-x_{k}\right)}\right\}=0,
		\end{equation}
		which holds for all $x \in \Delta_{d}$. 
	%The proof of  \eqref{eqn2.4} is given  in Lemma \ref{dangthuc}.
We obtain that 
		\begin{align}  
			&d \sum\limits_{d \geq m>k \geq 1} 2p . \log \left(X_{m}\left(t \wedge T_{n}\right)-X_{k}\left(t \wedge T_{n}\right)\right) \notag 
			\\
			=&\sum\limits_{1 \leq m \leq d} \sum\limits_{k \neq m} \dfrac{2p \sigma_{m}\left(X_{m}(t)\right) 1_{\left\{t \leq T_{n}\}\right.}}{X_{m}(t)-X_{k}(t)} d B_{m}(t) \notag  \\
			 &+\sum\limits_{d \geq m>k \geq 1} \dfrac{2p\left(2 \lambda-\dfrac{1}{2}(\sigma_{m}^{2}\left(X_{m}(t)\right)+\sigma_{k}^{2}\left(X_{k}(t)\right))\right) 1_{\left\{t \leq T_{n}\right\}}}{\left(X_{m}(t)-X_{k}(t)\right)^{2}} d t. \label{eqn2.5}
		\end{align}
	Let  $H^n_{t}, I^n_{t}, J^n_{t}$ be stochastic processes defined by
		\begin{align*}
		    & H^n_0 = I^n_0 = J^n_0 = 0,\\ 
			&d H^n_{t}= \sum\limits_{d \geq m>k \geq 1} \dfrac{2p\left(2 \lambda-\dfrac{1}{2}(\sigma_{m}^{2}\left(X_{m}(t)\right)+\sigma_{k}^{2}\left(X_{k}(t)\right))\right) 1_{\left\{t \leq T_{n}\right\}}}{\left(X_{m}(t)-X_{k}(t)\right)^{2}} d t,\\
			&d I^n_{t}= \sum\limits_{1 \leq m \leq d} \sum\limits_{k \neq m} \dfrac{2p \sigma_{m}\left(X_{m}(t)\right) 1_{\{t \leq T_{n}\}}}{(X_{m}(t)-X_{k}(t))} d B_{m}(t), \\
			&d J^n_{t}=\sum\limits_{1 \leq m \leq d} \sum\limits_{k \neq m} \dfrac{2p \sigma_{m}\left(X_{m}(t)\right) 1_{\{t \leq T_{n}\}}}{X_{m}(t)-X_{k}(t)} d F^n_{m+d}(t).
		\end{align*}

		It follows from \eqref{def:F} that  $\frac{d \mathbb P}{d \mathbb Q_{n}} = \exp \left(J^n_{t}-\dfrac{\langle J^n\rangle_{t}}{2}\right)$. Thus, 
	\begin{align}
			S_n &:= \mathbb E_{\mathbb P}\left(\prod_{d \geq m>k \geq 1}\left(X_{m}\left(t \wedge T_{n}\right)-X_{k}\left(t \wedge T_{n}\right)\right)^{-2p}\right) \notag\\
			&=\mathbb E_{\mathbb Q_n}\left(\prod_{d \geq m>k \geq 1}\left(X_{m}\left(t \wedge T_{n}\right)-X_{k}\left(t \wedge T_{n}\right)\right)^{-2p} \exp \left(J^n_{t}-\dfrac{1}{2}\langle J^n\rangle_{t}\right)\right) \notag.
	\end{align}
	Using the decomposition 
	$$ J^n_{t}-\dfrac{1}{2}\langle J^n\rangle_{t} = (I^n_t + H^n_t) + (J^n_t - I^n_t -  \langle J^n - I^n \rangle_t ) + \big( \langle J^n - I^n \rangle_t - H^n_t-\frac 12 \langle J \rangle _t\big),$$
	and the estimate 
	\begin{equation} \label{cmsau}
	\langle J^n - I^n \rangle_t - H^n_t-\frac 12 \langle J^n \rangle _t \leq 0,
	\end{equation} 
	which will be proved later, we get 
	\begin{align}
S_n &\leq \mathbb E_{\mathbb Q_n} \left[\Big(\prod _ { d \geq m > k \geq 1 } ( X _ { m } ( t \wedge T _ { n } ) - X _ { k } ( t \wedge T _ { n } ) ) ^ { - 2p } \Big)\exp \left(I^n_{t}\right.\right.+H^n_{t}) \notag\\
			&\indent \indent \cdot \exp \left(J^n_{t}-I^n_{t}-\langle J^n-I^n\rangle_{t}\right) \bigg].\notag
	\end{align}
	Next, using the Cauchy-Schward inequality and equation \eqref{eqn2.5}, we get, 
		\begin{align}
	S_n^2   	&\leq \mathbb E_{\mathbb Q_n}\left[\bigg(\prod_{d \geq m>k \geq 1}(X_{m}(t \wedge T_{n})-X_{k}(t \wedge T_{n}))^{-4 p}\bigg) \exp \Big(2\left(I^n_{t}+H^n_{t}\right)\Big)\right] \cdot \notag\\		
	&\indent \mathbb E_{\mathbb Q_n}\Big(\exp \left(2\left(J^n_{t}-I^n_{t}\right)-2\langle J^n-I^n \rangle_{t}\right)\Big) \notag\\
			&= a_{p}^2 \cdot \mathbb E_{\mathbb Q_n}\Big(\exp \left(2\left(J^n_{t}-I^n_{t}\right)-2\langle J^n-I^n \rangle_{t}\right)\Big),
			\notag 
        %\label{eqn2.6} 
        \end{align}
where  $a_{p}=\exp \left[ \sum\limits_{d \geq m>k \geq 1} 2p \cdot \log \left(X_{m}(0)-X_{k}(0)\right)\right]$.

	Note that  $\exp \left(2\left(J^n_{t}-I^n_{t}\right)-2\langle J^n - I^n \rangle_{t}\right)$ is a martingale with respect to $\mathbb Q_n$, which implies $$\mathbb E_{\mathbb Q_n}\left(\exp \left(2\left(J^n_{t}-I^n_{t}\right)-2\langle J^n-I^n\rangle_{t}\right)\right)=1.$$
			Therefore, 
		\begin{equation}\label{danhgiatich}
		\mathbb E_{\mathbb P}\left(\prod_{d \geq m>k \geq 1}\left(X_{m}\left(t \wedge T_{n}\right)-X_{k}\left(t \wedge T_{n}\right)\right)^{-p}\right) \leq a_{p}.
		\end{equation}
	In the following, we will show that  $\langle J^n-I^n\rangle_{t} \leq H^n_{t}+\frac 12\langle J^n \rangle _t.$
		Indeed, we have
			\begin{align*}
		\langle J^n -I^n \rangle_{t}-\frac 12 \langle J^n \rangle_t 	 = &  \frac 32 \int_0^t \sum\limits_{m=1}^{d}\left(\sum\limits_{k \neq m} \dfrac{2p \sigma_{m}\left(X_{m}(s)\right) 1_{\{s \leq T_{n}\}}}{X_{m}(s)-X_{k}(s)}\right)^{2} ds \\
		= &  6p^2 \int_0^t \sum\limits_{m=1}^{d} \sigma_{m}\left(X_{m}(s)\right)^2 1_{\{s \leq T_{n}\}} \left(\sum\limits_{k \neq m} \dfrac{1 }{X_{m}(s)-X_{k}(s)}\right)^{2} ds \\
		\leq & 6 p^{2} \|\sigma\|^{2}_{\infty} \int_0^t 1_{\{s \leq T_{n}\}} \sum\limits_{m=1}^{d} \sum\limits_{k \neq m}\dfrac{1}{\left(X_{m}(s)-X_{k}(s)\right)^{2}} ds  \\
			 & + 6 p^{2} \|\sigma\|^{2}_{\infty} \int_0^t 1_{\{s \leq T_{n}\}} \sum\limits_{m=1}^{d} \sum\limits_{h \neq k \neq m} \dfrac{1}{\left(X_{m}(s)-X_{k}(s)\right)\left(X_{m}(s)-X_{h}(s)\right)}ds  \\
		%	=& \int_0^t \frac 32 p^{2} \|\sigma\|^{2}_{\infty} 1_{\{s \leq T_{n}\}} \sum\limits_{m=1}^{d} \sum\limits_{k \neq m}\dfrac{1}{\left(X_{m}(s)-X_{k}(s)\right)^{2}}ds\\
			=&\int_0^t 12p^{2} \|\sigma\|^{2}_{\infty} 1_{\{s \leq T_{n}\}} \sum\limits_{m=1}^{d} \sum\limits_{m>k }\dfrac{1}{\left(X_{m}(s)-X_{k}(s)\right)^{2}}ds,
		\end{align*}
		where  the last equation is a consequence of identity \eqref{eqn2.4}.
		Since $(6p+1)\|\sigma\|_\infty^2 \leq 2\lambda$, we get  $2p(2\lambda - \|\sigma\|^{2}_{\infty}) \geq 12p^2\|\sigma\|_\infty^2 $. Hence, 
		\begin{align*}
			 H^n_{t} & \geq \displaystyle \int \limits _{0}^{t}\sum\limits_{d \geq m>k \geq 1} \dfrac{2p\left(2 \lambda-\|\sigma\|_{\infty}^{2}\right) 1_{\left\{s \leq T_{n}\right\}}}{\left(X_{m}(s)-X_{k}(s)\right)^{2}} d s
			 & \geq  \displaystyle \int \limits _{0}^{t}\sum\limits_{d \geq m>k \geq 1} \dfrac{12p^2 \|\sigma\|_\infty^2 1_{\left\{s \leq T_{n}\right\}}}{\left(X_{m}(s)-X_{k}(s)\right)^{2}} d s \geq 
		 \langle J^n -I^n \rangle_{t}-\frac 12 \langle J^n \rangle_t. \notag 
		 \end{align*} 
	For all $d \geq i>j\geq 1$ we have
		\begin{align*}
			&\mathbb E_{\mathbb P}\left(\left(X_{i}\left(t \wedge T_{n}\right)-X_{j}\left(t \wedge T_{n}\right)\right)^{-p}\right)\\
			&\leq \mathbb E_{\mathbb P}\left(\prod_{d \geq m>k \geq 1} \left(X_{m}\left(t \wedge T_{n}\right)-X_{k}\left(t \wedge T_{n}\right)\right)^{-2p}\right) + \mathbb E_{\mathbb P}\left(\prod_{(m, p) \neq(i, j)}\left(X_{m}\left(t \wedge T_{n}\right)-X_{k}\left(t \wedge T_{n}\right)\right)^{2p}\right).
		\end{align*}
		
		From \eqref{danhgiatich} and the result of Lemma \ref{bd2}, there exists a constant $C_{p}>0$ which does not depend on $n$, such that $$\mathbb E_{\mathbb P}\left(\left(X_{i}\left(t \wedge T_{n}\right)-X_{j}\left(t \wedge T_{n}\right)\right)^{-p}\right) \leq C_{p},$$  for all $d \geq i>j \geq 1, t \in[0, T].$
		Let $n$ tend to infinity and apply Fatou's lemma, we obtain
		$$\sup_{0 \leq t \leq T} \max _{0 \leq i, j \leq d} \mathbb E_{\mathbb P}\left(\left|X_{i}(t)-X_{j}(t)\right|^{-p}\right)\leq C_{p}.$$
		We conclude the proof.
		
	\end{proof} 
		\subsection{The existence of negative moments for non-zero function $b$}
	Throughout this section, we suppose that  $\sigma$ is uniformly elliptic.
	\begin{GT}[C] 
		$L^{2}:= \max \limits _{1 \leq i \leq d} \sup \limits _{x \in \mathbb{R}} \dfrac{1}{\sigma_{i}^{2}(x)}< \infty$.
	\end{GT}

	\begin{dl}\label{bd1.2}
	 Suppose that the Assumptions (B1)-(B5) and (C) hold. In addition, suppose that $2\lambda > \|\sigma\|_\infty^2$. 
%	 , and there exists a positive constant $p>0$ such that
%	 \begin{equation} \label{dk:sigma2} 
%	 (3 p+1) \|\sigma\|^2_\infty < 2 \lambda.
%	 \end{equation} 
	Then the system \eqref{eqn1.1} has a strong and path-wise unique solution on $[0,T]$. For any positive constant $p < \frac 16 ( \frac{2\lambda}{\| \sigma\|_\infty^2} -1)$, there exists a finite positive constant $C = C(p,v, d, \lambda, \|b\|_\infty, L, T)$ such that
	\begin{equation} \label{dg01} 
	\sup _{0 \leq t \leq T}	\max _{0 \leq i< j \leq d} \mathbb E_{\mathbb{P}}  \left(\left|X_{i}(t)-X_{j}(t)\right|^{-p}\right) \leq C.
	\end{equation} 
	Moreover, for any $q>0$,  there exists a finite positive constant $c= c(q,v, d, \lambda, \|b\|_\infty, L, T)$, such that 
	\begin{equation} \label{dg02}
	\sup_{t \in [0,T]} \mathbb E_{\mathbb P}\left(\|X_{t }\|^{2 q}\right)\leq c.
	\end{equation} 
	\end{dl}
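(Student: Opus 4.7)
The plan is to reduce to the driftless case handled in Theorem \ref{bd1} via a Girsanov change of measure that kills the drift term $b_i$. Existence and uniqueness of a strong non-colliding solution on $[0,T]$ is immediate from Theorem \ref{theorem2.2}, since assumptions (B1)-(B5) are in force. Because of (B3) and the uniform ellipticity (C), the function $b_i/\sigma_i$ is bounded by $\|b\|_\infty L$ on $\mathbb R$. I would therefore define
$$M_t := \exp\left(-\sum_{i=1}^d \int_0^t \frac{b_i(X_i(s))}{\sigma_i(X_i(s))} dB_i(s) - \frac{1}{2}\sum_{i=1}^d \int_0^t \frac{b_i^2(X_i(s))}{\sigma_i^2(X_i(s))} ds\right),$$
which is a true martingale on $[0,T]$ by Novikov's criterion, and set $d\mathbb Q/d\mathbb P|_{\mathcal F_T}=M_T$. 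By the multidimensional Girsanov theorem, $\widetilde B_i(t) := B_i(t) + \int_0^t b_i(X_i(s))/\sigma_i(X_i(s))\,ds$ is a $\mathbb Q$-Brownian motion, and substituting into \eqref{eqn1.1} shows that under $\mathbb Q$ the process $X$ satisfies the driftless equation $dX_i = \sum_{j\neq i}\lambda/(X_i-X_j)\,dt + \sigma_i(X_i)\,d\widetilde B_i$ with the same initial condition $v\in\Delta_d$.

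Since the $\mathbb Q$-law of $X$ coincides with the $\mathbb P$-law of the corresponding driftless system, assumptions (B2) and (B4) together with $2\lambda>\|\sigma\|_\infty^2$ let me apply Theorem \ref{bd1} and Lemma \ref{bd2} \emph{under the measure $\mathbb Q$}. This yields, for every $q>0$ and every $\tilde p \leq \frac{1}{6}\bigl(\frac{2\lambda}{\|\sigma\|_\infty^2}-1\bigr)$,
$$\sup_{0\leq t\leq T} \mathbb E_{\mathbb Q}\bigl[\|X_t\|^{2q}\bigr] \leq c_1, \qquad \sup_{0\leq t\leq T}\max_{i<j}\mathbb E_{\mathbb Q}\bigl[|X_i(t)-X_j(t)|^{-\tilde p}\bigr] \leq C_1,$$
with constants depending only on $q$ (resp.\ $\tilde p$), $v$, $d$, $\lambda$, $T$, since no $b$- or $L$-dependence is present in the driftless regime.

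To transfer these bounds back to $\mathbb P$, I would write $\mathbb E_{\mathbb P}[f(X_t)]=\mathbb E_{\mathbb Q}[M_t^{-1}f(X_t)]$ for nonnegative $\mathcal F_t$-measurable $f$ and apply H\"older with conjugate exponents $r,r'$ to get
$$\mathbb E_{\mathbb P}[f(X_t)] \leq \mathbb E_{\mathbb Q}\bigl[M_t^{-r'}\bigr]^{1/r'} \mathbb E_{\mathbb Q}\bigl[f(X_t)^r\bigr]^{1/r}.$$
Expressing $M_t^{-1}$ as a stochastic exponential with respect to $\widetilde B$ (substitute $dB_i = d\widetilde B_i-(b_i/\sigma_i)(X_i)\,ds$ into the log of $M_t^{-1}$) and using that $(b_i/\sigma_i)^2$ is bounded by $\|b\|_\infty^2 L^2$ shows
$$\sup_{0\leq t\leq T} \mathbb E_{\mathbb Q}\bigl[M_t^{-r'}\bigr] \leq \exp\bigl(\tfrac{r'(r'-1)}{2}\,d\,\|b\|_\infty^2 L^2 T\bigr) < \infty$$
for every $r'\geq 1$, because the true stochastic exponential part has unit $\mathbb Q$-expectation and the leftover deterministic factor is bounded. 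For \eqref{dg02} this already suffices with any $r>1$, since Lemma \ref{bd2} gives the $\mathbb Q$-moment bound of any order. For \eqref{dg01}, the strict inequality $p<\frac{1}{6}\bigl(\frac{2\lambda}{\|\sigma\|_\infty^2}-1\bigr)$ is exactly what allows me to choose $r>1$ small enough that $\tilde p := pr$ still lies in the admissible range of Theorem \ref{bd1}; both H\"older factors are then finite. The only delicate point is controlling the inverse density $M_t^{-r'}$, which is straightforward here thanks to $\|b\|_\infty L<\infty$; the strict upper bound on $p$ is what creates the slack required by the H\"older step.
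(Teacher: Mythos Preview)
Your proof is correct and follows essentially the same route as the paper: kill the drift via a Girsanov change of measure (using the boundedness of $b_i/\sigma_i$ from (B3) and (C)), apply Theorem~\ref{bd1} and Lemma~\ref{bd2} to the driftless system under $\mathbb Q$, and then transfer the bounds back to $\mathbb P$ by H\"older together with an exponential-moment estimate for the inverse Radon--Nikodym density. The only cosmetic difference is the parametrisation of the H\"older step---you pick $r>1$ close to $1$ so that $pr$ stays below the threshold, whereas the paper fixes $r=\frac{1}{6}\bigl(\frac{2\lambda}{\|\sigma\|_\infty^2}-1\bigr)$ and uses exponents $r/p$ and $r/(r-p)$---but these are equivalent.
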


\begin{proof}
The existence and uniqueness of the solution to \eqref{eqn1.1} is deduced from  Theorem \ref{theorem2.2}. Let us denote    $W_{i}(t)=B_{i}(t)+\displaystyle\int\limits_{0}^{t} \dfrac{b_{i}\left(X_{i}\right)}{\sigma_{i}\left(X_{i}\right)} d s$, and $ G_{t}=\int \limits _{0}^{t}\sum\limits_{i=1}^{d}  \dfrac{-b_{i}\left(X_{i}\right)}{\sigma_{i}\left(X_{i}\right)} d B_{i}$. Since $b$ is bounded and $\sigma$ is uniformly elliptic,  the process $\left(\exp \left(G_{t}-\dfrac{\langle G\rangle_{t}}{2}\right), \mathcal{F}_t \right)_{0\leq t \leq T}$ is a martingale. We  consider the probability measure  $\mathbb Q$ defined by
	$$\frac{d \mathbb Q}{d\mathbb P} \Big| _{F_{t}} = \exp \left(G_{t}-\dfrac{\langle G\rangle_{t}}{2}\right), \quad 0 \leq t \leq T.$$
Thanks to  the assumptions (B3) and (C), it follows from the Girsanov theorem that  $\left(W_{1}(s), \ldots, W_{d}(s)\right)_{s \in [0,T]}$ is a  $d$-dimensional Brownian motion with respect to $\mathbb Q$. 
Rewrite equation \eqref{eqn1.1} under probability measure $\mathbb Q$, one has 
	$$
	\left\{\begin{array}{c}
		d X_{i}=\sigma_{i}\left(X_{i}\right) d W_{i}+\sum\limits_{j \neq i} \dfrac{\lambda}{X_{i}-X_{j}} d t, \quad i=1, \ldots, d, \\
		\left(X_{1}(0), \ldots, X_{d}(0)\right)=v \in \Delta_{d}.
	\end{array}\right.
	$$
	
\noindent	Denote by $U_{t}= \int _{0}^{t}\sum \limits_{i=1}^{d}\dfrac{b_{i}(X_{i})}{\sigma_{i}(X_{i})}dW_{i}$. 
We have 
$\langle U \rangle_{t}= \int _{0}^{t}\sum \limits_{i=1}^{d}\dfrac{b^2_{i}(X_{i})}{\sigma^2_{i}(X_{i})}ds \leq db^2L^2T$.
According to the Girsanov theorem, for all $d\geq i >j \geq 1:$
\begin{align*} 
		\mathbb E_{\mathbb P}\left(\left|X_{i}(t)-X_{j}(t)\right|^{-p}\right) &=\mathbb E_{\mathbb Q}\left(\left|X_{i}(t)-X_{j}(t)\right|^{-p} \exp \left(U_{t}-\dfrac{\langle U\rangle_{t}}{2}\right)\right) \\
		&\leq\left[\mathbb E_{\mathbb Q}\left(\left|X_{i}(t)-X_{j}(t)\right|^{-r}\right)\right]^{\frac{p}{r}} \cdot\left[\mathbb E_{\mathbb Q}\left(\exp \left(\frac{r}{r-p}\left(U_{t}-\frac{\langle U\rangle_{t}}{2}\right)\right)\right)\right]^{\frac{r-p}{r}}, 
\end{align*} 
where $ r = \frac{2 \lambda - \|\sigma\|^2_\infty}{6\|\sigma\|^2_\infty} > p$. 
\noindent	The first term in the last expression is bounded because of  Theorem \ref{bd1}. We estimate the second term
\begin{align*}
    		&\mathbb E_{\mathbb Q}\left(\exp \left(\dfrac{r}{r-p}\left(U_{t}-\dfrac{\langle U\rangle_{t}}{2}\right)\right)\right) \\
		=&\mathbb E_{\mathbb Q}\left(\exp \left(\dfrac{r}{r-p} U_{t}-\dfrac{r^{2}\langle U\rangle_{t}}{2(r-p)^{2}}\right) \cdot \exp \left(\left(\dfrac{r^{2}}{2(r-p)^{2}}-\dfrac{1}{2}\right)\langle U\rangle_{t}\right)\right)\\
		\leq &  \exp \left(\left(\dfrac{r^{2}}{2(r-p)^{2}}-\dfrac{1}{2}\right) db^2L^2T \right)
		\mathbb E_{\mathbb Q}\left(\exp \left(\dfrac{r}{r-p} U_{t}-\dfrac{r^{2}\langle U\rangle_{t}}{2(r-p)^{2}}\right) \right).
\end{align*}
	
\noindent	Because $b_{i}$ and $\sigma_{i}$ are bounded,  $\langle U\rangle_{t}$ is bounded and $\exp \left(\dfrac{r}{r-p} U_{t}-\dfrac{r^{2}\langle U\rangle_{t}}{2(r-p)^{2}}\right)$
	is a martingale under probability measure  $\mathbb Q$. Therefore, $\mathbb E_{\mathbb Q}\left(\exp \left(\dfrac{r}{r-p} U_{t}-\dfrac{r^{2}\langle U\rangle_{t}}{2(r-p)^{2}}\right) \right)= 1$,  which
implies \eqref{dg01}. 
%	\end{proof}
%
%We expand the result of Lemma \ref{bd2} as follows. 
%\begin{bd}\label{1.2}
%	Suppose that Assumptions (B1)-(B5) and (C) hold. Then for all $q>0$, there exists a constant $c_{2 q}>0$ such that $\mathbb E_{\mathbb P}\left(\|X_{t }\|^{2 q}\right)\leq c_{2 q}.$
%\end{bd}
%\begin{proof}
	
	Next, we prove \eqref{dg02}. For any $q >0$, 
	it follows from the Girsanov theorem and H\"older's inequality that 
	\begin{align*} 
	\mathbb E_{\mathbb P}\left(\|X_{t }\|^{2 q}\right) & =\mathbb E_{\mathbb Q}\left(\|X_{t }\|^{2 q} \exp \left(U_{t}-\dfrac{\langle U\rangle_{t}}{2}\right)\right) \\
	&\leq\left[\mathbb E_{\mathbb Q}\left(\|X_{t }\|^{\frac{2qr}{p}}\right)\right]^{\frac{p}{r}} \cdot\left[\mathbb E_{\mathbb Q}\left(\exp \left(\dfrac{r}{r-p}\left(U_{t}-\dfrac{\langle U\rangle_{t}}{2}\right)\right)\right)\right]^{\frac{r-p}{r}}\\
	&= \left[\mathbb E_{\mathbb Q}\left(\|X_{t }\|^{\frac{2qr}{p}}\right)\right]^{\frac{p}{r}}  \exp \left(\frac{r-p}{r} \left(\dfrac{r^{2}}{2(r-p)^{2}}-\dfrac{1}{2}\right) db^2L^2T \right)
	\end{align*}
	Thanks to  Lemma \ref{bd2}, the last term is bounded.  We obtain \eqref{dg02}. 
\end{proof}

	\subsection{Applications}
	In this section, we will apply the existence of the negative moments to study the convergence rate of some numerical approximation schemes for equation \eqref{eqn1.1}. The key point is to   verify Hypothesis 2.7 in \cite{NT17} and Hypothesis $H_{\hat{p}}$ in \cite {LN}. 

	\subsubsection{Semi-implicit Euler-Maruyma scheme}

We recall the semi-implicit Euler-Maruyama  scheme for \eqref{eqn1.1}, which was introduced in \cite{NT17}. For each positive interger $n$, let 
	$X^{(n)}(0)=X(0).$ 
	For each $0\leq k \leq n$, set   $t^{(n)}_k = kT/n$, and 
	$X^{(n)}(t_{k+1}^{(n)})$ is the unique solution in $\Delta_{d}$ of the following system of  equations
	\begin{align*}
		X_{i}^{(n)}\left(t_{k+1}^{(n)}\right) =& X_{i}^{(n)}\left(t_{k}^{(n)}\right) 
		+\left\{ \sum_{j \neq i} \frac{\lambda}{X_{i}^{(n)}\left(t_{k+1}^{(n)}\right)-X_{j}^{(n)}\left(t_{k+1}^{(n)}\right)}+b_{i}\left(X_{i}^{(n)}\left(t_{k}^{(n)}\right)\right) \right\}\dfrac{T}{n}\\
	\indent	&+\sigma_{i}\left(X_{i}^{(n)}\left(t_{k}^{(n)}\right)\right)\left\{B_{i}\left(t_{k+1}^{(n)}\right)-B_{i}\left(t_{k}^{(n)}\right)\right\}, \quad 1\leq i \leq d.
	\end{align*}
	 
%The following result is straightforward from Theorem 2.10 in \cite{NT17}, \ref{bd3} and \ref{bd2}

\begin{dl}\label{bd7}
%Assume that 
%$\lambda, \sigma_{i}, b_{i}$ in the equation \eqref{eqn1.1} and $p \geq 12$ satisfy 
Suppose that $b=(b_i)_{1 \leq i \leq d}$ and $\sigma = (\sigma_i)_{1 \leq i \leq d}$ are globally Lipschitz continuous on $\mathbb{R}$. In addition, let assumptions  (B3), (B5), (C) hold, and
$\lambda > \frac{37}{2} \| \sigma \|_\infty.$
Let $X$ be the unique strong solution of equation \eqref{eqn1.1}. 
Then the semi-implicit Euler-Maruyama scheme $X^{(n)}$ converges at rates of order $\dfrac{1}{2}$ in $L^{2}$, i.e., there exists a positive constant $C= C(v,d,\lambda,b, \sigma, T)$ such that
	\begin{equation}\label{rate1}
	\sup _{1 \leq k \leq n} \mathbb  E \left[\left|X\left(t_{k}^{(n)}\right)-X^{(n)}\left(t_{k}^{(n)}\right)\right|^{2}\right] \leq \dfrac{C}{n}.
	\end{equation} 
\end{dl}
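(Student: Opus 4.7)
The overall strategy is to reduce the assertion to the general convergence result for the semi-implicit Euler--Maruyama scheme established in \cite{NT17} (specifically Theorem 3.6 there), whose only non-trivial input is their Hypothesis 2.7. This hypothesis has two components: uniform-in-time boundedness of positive moments of the exact solution of sufficiently high order, and uniform-in-time boundedness of negative moments of every particle gap $X_i(t)-X_j(t)$ of a prescribed order. The plan is to check both using Theorem \ref{bd1.2} from the previous subsection, and then to quote Theorem 3.6 of \cite{NT17} to conclude \eqref{rate1}. The role of the hypothesis $\lambda>\frac{37}{2}\|\sigma\|_\infty^{2}$ is precisely to make the upper bound $\frac{1}{6}\bigl(\frac{2\lambda}{\|\sigma\|_\infty^{2}}-1\bigr)$ on admissible negative moment orders strictly larger than the value demanded by \cite{NT17}.

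First I would verify that all the hypotheses of Theorem \ref{bd1.2} are met. Global Lipschitz continuity of $b$ gives (B1); (B3), (B5) and (C) are assumed directly; global Lipschitz continuity of $\sigma$ gives (B2) with $\rho(u)=Ku^{2}$; and the standing hypothesis $\lambda>\frac{37}{2}\|\sigma\|_\infty^{2}$ is much stronger than the condition $2\lambda>\|\sigma\|_\infty^{2}$ required by Theorem \ref{bd1.2} (and than (B4)). Thus \eqref{dg01} and \eqref{dg02} of Theorem \ref{bd1.2} hold, delivering all positive moments of $X_t$ and negative moments of every gap of any order strictly less than $\frac{1}{6}\bigl(\frac{2\lambda}{\|\sigma\|_\infty^{2}}-1\bigr)$.

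Next I would verify Hypothesis 2.7 of \cite{NT17}. Tracing through the $L^{2}$ error analysis for the semi-implicit Euler--Maruyama scheme in \cite{NT17}, one sees that the hypothesis reduces to a requirement that $\sup_{t\le T}\mathbb{E}|X_i(t)-X_j(t)|^{-p_0}<\infty$ for a certain explicit order $p_0$ (together with finiteness of some positive moments, supplied directly by \eqref{dg02}). The assumption $\lambda>\frac{37}{2}\|\sigma\|_\infty^{2}$ gives $\frac{1}{6}\bigl(\frac{2\lambda}{\|\sigma\|_\infty^{2}}-1\bigr)>6$, so one may pick some $p>p_0$ below that threshold; Theorem \ref{bd1.2} then provides the required negative moment bound, and Hypothesis 2.7 is satisfied.

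The main obstacle is the bookkeeping in the second paragraph: one must track carefully, inside the Gronwall argument of \cite{NT17}, the exact order $p_0$ of negative moments needed to absorb the local truncation error and the drift $\sum_{j\neq i}\lambda/(X_i-X_j)$ (which itself has large $L^{2}$-norm controlled by negative moments of order $2$), while simultaneously using H\"older to redistribute exponents against the positive moments from \eqref{dg02}. The specific threshold $\frac{37}{2}$ is the translation of $p_0<6$ back to a constraint on $\lambda/\|\sigma\|_\infty^{2}$ through the bound from Theorem \ref{bd1.2}. Once this bookkeeping is made explicit and matched against Hypothesis 2.7, the estimate \eqref{rate1} follows directly from Theorem 3.6 of \cite{NT17}.
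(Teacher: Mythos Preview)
Your strategy is essentially the paper's: invoke Theorem \ref{bd1.2} to supply the moment bounds needed by Hypothesis 2.7 of \cite{NT17}, and then quote the convergence theorem from \cite{NT17}. Two small discrepancies are worth noting. First, in the paper Hypothesis 2.7 with $\hat p=6$ is not stated as a negative-moment bound on gaps directly, but as the Kolmogorov-type time-regularity estimate
\[
\max_{1\le i\le d}\mathbb E_{\mathbb P}\bigl[|X_i(t)-X_i(s)|^{6}\bigr]\le C(t-s)^{3},\qquad 0\le s\le t\le T,
\]
which the paper derives from \eqref{dg01}--\eqref{dg02} by following the argument of the proof of Theorem 3.6 in \cite{NT17} (the drift $\sum_{j\neq i}\lambda/(X_i-X_j)$ forces sixth negative moments of the gaps into this estimate, and this is exactly where the threshold $\frac{1}{6}(\frac{2\lambda}{\|\sigma\|_\infty^{2}}-1)>6$, i.e.\ $\lambda>\tfrac{37}{2}\|\sigma\|_\infty^{2}$, enters). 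Second, the final step in the paper quotes Theorem 2.10 of \cite{NT17}, not Theorem 3.6, for the convergence rate \eqref{rate1}. Apart from these labeling issues your outline matches the paper's proof.
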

\begin{proof}
	Thanks to Theorem \ref{bd1.2}, using the argument in the of proof of Theorem 3.6 in \cite{NT17}, we can show that  
	\begin{equation}\notag
	\max \limits_ {1 \leq i \leq d} \mathbb E_{\mathbb P}\left[\left|X_{i}(t)-X_{i}(s)\right|^{6}\right] \leq C (t-s)^{3},\forall 0 \leq s \leq t \leq T,
	\end{equation}
	for some finite positive constant	$C$ Therefore, the Hypothesis 2.7 in \cite{NT17} is satisfied with $\hat{p} = 6$. Again, by following the argument in the proof of Theorem 2.10 in \cite{NT17}, we can obtain the desired result. 
\end{proof} 
For the case $b_i \equiv 0$ for all $i=1,\ldots, d$ we can show the rate of convergence of the semi-implicit Euler-Maruyama scheme without assumming condition (C). In particular, we have the following result. 
\begin{dl}\label{bd7b}
	Suppose that $b_i \equiv 0$ for all $i=1,\ldots, d$, $\sigma = (\sigma_i)_{1 \leq i \leq d}$ are globally Lipschitz continuous on $\mathbb{R}$, and
	$\lambda > \frac{37}{2} \| \sigma \|_\infty.$
	Let $X$ be the unique strong solution of equation \eqref{eqn1.1}. 
	Then the conclusion \eqref{rate1} holds. 
\end{dl}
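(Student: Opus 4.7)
The strategy is to mirror the proof of Theorem \ref{bd7} but invoke Theorem \ref{bd1} in place of Theorem \ref{bd1.2}. Since $b \equiv 0$, assumption (C) --- which served in Theorem \ref{bd1.2} only to justify the Girsanov transformation that removes the drift --- is no longer needed, and Theorem \ref{bd1} supplies the required negative moment bound directly.

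First I would verify that Theorem \ref{bd1} applies. Global Lipschitz continuity of each $\sigma_i$ yields (B2) (take $\rho(u)=L^{2}u^{2}$ with $L$ a common Lipschitz constant, so that the Yamada--Watanabe integral diverges), while the standing hypothesis $\lambda > \tfrac{37}{2}\|\sigma\|_\infty^{2}$ trivially implies $2\lambda > \|\sigma\|_\infty^{2}$, giving (B4). Theorem \ref{bd1} then produces the unique strong non-exploding, non-colliding solution of \eqref{eqn1.1} together with
$$\sup_{t \in [0,T]} \max_{1 \leq i < j \leq d} \mathbb E_{\mathbb P}\bigl[|X_i(t)-X_j(t)|^{-p}\bigr] \leq C$$
for every $p \leq \tfrac{1}{6}\bigl(\tfrac{2\lambda}{\|\sigma\|_\infty^{2}}-1\bigr)$. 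The constant $\tfrac{37}{2}$ in the hypothesis on $\lambda$ is calibrated precisely so that this upper bound strictly exceeds $6$, allowing the choice $p=6$.

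Next I would combine this sixth-order negative moment with the positive moment bound supplied by Lemma \ref{bd2} to derive the Hölder-type estimate
$$\max_{1 \leq i \leq d} \mathbb E_{\mathbb P}\bigl[|X_i(t)-X_i(s)|^{6}\bigr] \leq C(t-s)^{3}, \qquad 0 \leq s \leq t \leq T,$$
by exactly the same calculation as in the proof of Theorem 3.6 of \cite{NT17}: the martingale contribution is controlled via Burkholder--Davis--Gundy and boundedness of $\sigma$, while the interaction term $\sum_{j\neq i}\lambda/(X_i-X_j)$ is handled by Hölder's inequality together with the negative moment of order $6$ on the gaps. This verifies Hypothesis 2.7 of \cite{NT17} with $\hat p=6$, after which Theorem 2.10 of \cite{NT17} delivers \eqref{rate1}.

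I anticipate no serious obstacle: the drift-free setting is genuinely easier than Theorem \ref{bd7} since no Girsanov step is needed, so (C) --- which excludes simple examples such as $\sigma_i$ vanishing at a single point --- can be dropped. The only bookkeeping point is confirming that the quantitative lower bound on $\lambda$ is large enough to make the negative-moment exponent reach the value $6$ demanded by Hypothesis 2.7, and this is precisely what $\lambda > \tfrac{37}{2}\|\sigma\|_\infty^{2}$ encodes.
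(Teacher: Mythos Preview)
Your proposal is correct and follows precisely the route the paper intends: the paper states Theorem~\ref{bd7b} without proof, leaving it implicit that one adapts the argument for Theorem~\ref{bd7} by invoking Theorem~\ref{bd1} (together with Lemma~\ref{bd2} for the positive moments) in place of Theorem~\ref{bd1.2}, thereby dispensing with Assumption~(C). Your verification that the threshold on $\lambda$ forces the admissible negative-moment exponent in Theorem~\ref{bd1} to reach $p=6$, so that Hypothesis~2.7 of \cite{NT17} holds with $\hat p=6$, is exactly the intended calibration.
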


\subsubsection{Semi-implicit Milstein scheme}
We recall the semi-implicit Milstein scheme for \eqref{eqn1.1}, which was introduced in \cite{LN}. The scheme is defined as follow. Suppose that Assumption (C3) holds. For each integer $n \geq 1$ and $1 \leq k \leq n$, set $t^{(n)}_{k} = kT/n$, $X^{(n)}(0):=X(0)$. Moreover, for each $k=0,\ldots, n-1$ and $t \in [t^{(n)}_{k},t^{(n)}_{k+1}]$, $X^{(n)}(t)=(X^{(n)}_{i}(t))_{1 \leq i \leq d}$ is the unique solution in $\Delta _{d}$ of the following system of equations
	\begin{align*}
		X_{i}^{(n)}(t)
		&=X_{i}^{(n)}\left(t_{k}^{(n)}\right) 
		+\left\{ \sum_{j \neq i} \frac{\lambda}{X_{i}^{(n)}(t)-X_{j}^{(n)}(t)}+b_{i}\left(X_{i}^{(n)}\left(t_{k}^{(n)}\right)\right) \right\}\big(t-t^{(n)}_{k}\big)\\
    	&+\sigma_{i}\left(X_{i}^{(n)}\left(t_{k}^{(n)}\right)\right)\left\{B_{i}(t)-B_{i}\left(t_{k}^{(n)}\right)\right\}\\
    	&+\frac{1}{2}\sigma_{i}\left(X_{i}^{(n)}\left(t_{k}^{(n)}\right)\right)\sigma'_{i}\left(X_{i}^{(n)}\left(t_{k}^{(n)}\right)\right)\bigg\{\Big(B_{i}(t)-B_{i}(t^{(n)}_{k})\Big)^{2}-\big(t-t^{(n)}_{k}\big)\bigg\}, \quad i=1,\ldots,d.
	\end{align*}
%	\subsubsection{Convergence of semi-implicit scheme}
	
%\begin{nx}
Let $C^{2}_b{(\mathbb R)}$ denote the set of all fuctions $f:\mathbb R \to \mathbb R$ such that $f,f',f''$ are bounded.
The following theorem is straightaway from  Theorem \ref{bd1.2} and  Theorem 2.1 in \cite{LN}.
%\end{nx}
\begin{dl}\label{milstein convergence}
	Assume that $b_i, \sigma_i \in C^{2}_b{(\mathbb R})$ for all $i=1,\ldots,d$. In addition, suppose that all assumptions of Theorem \ref{bd7} also hold.  Then the semi-implicit Milstein scheme converges at rates of order $1$ in $L^{2}$, i.e., there exists a finite positive constant $C= C(v,d,\lambda,b, \sigma, T)$ such that
		$$ \sup _{k=1, \ldots, n} \mathbb E \left[\left|X\left(t_{k}^{(n)}\right)-X^{(n)}\left(t_{k}^{(n)}\right)\right|^{2}\right] \leq \dfrac{C}{n^{2}}.$$
\end{dl}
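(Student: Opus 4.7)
The plan is to reduce to a direct application of Theorem 2.1 in \cite{LN}, whose conclusion is exactly the $O(1/n^{2})$ rate that we want, by verifying its running hypothesis $H_{\hat p}$ for some integer $\hat p$. Since the Euler-Maruyama analysis in Theorem \ref{bd7} worked with $\hat p = 6$, I would aim for the same value here; the required ingredients split cleanly into a negative-moment bound for gaps, a polynomial-moment bound for the solution, and a H\"older-type path-regularity estimate.

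The first step is to extract the relevant negative-moment bound from Theorem \ref{bd1.2}. Since the assumptions of Theorem \ref{bd7} are in force, the lower bound on $\lambda$ imposed there gives
$$ \frac{1}{6}\left(\frac{2\lambda}{\|\sigma\|_\infty^{2}} - 1\right) \geq 6,$$
so Theorem \ref{bd1.2} applied with $p = 6$ yields
$$ \sup_{0 \leq t \leq T}\max_{1 \leq i < j \leq d} \mathbb E_{\mathbb P}\big(|X_i(t)-X_j(t)|^{-6}\big) \leq C,$$
together with the polynomial moment bound \eqref{dg02} for $X$.

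The second step is to derive
$$\max_{1 \leq i \leq d}\mathbb E_{\mathbb P}\big[|X_i(t) - X_i(s)|^{6}\big] \leq C(t-s)^{3}, \quad 0 \leq s \leq t \leq T.$$
This is obtained from the SDE \eqref{eqn1.1} via the Burkholder-Davis-Gundy inequality, using the boundedness of $b$ and $\sigma$ and the negative-moment estimate \eqref{dg01} to control the singular Coulomb-type drift $\sum_{j \neq i} \lambda/(X_i-X_j)$. This is precisely the calculation already used inside the proof of Theorem \ref{bd7} to verify Hypothesis 2.7 of \cite{NT17}.

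The main obstacle is bookkeeping rather than substance: one must match the precise wording of $H_{\hat p}$ in \cite{LN} and, in particular, check that the extra Milstein correction term $\tfrac{1}{2}\sigma_i(X_i^{(n)})\sigma_i'(X_i^{(n)})(\Delta B_i^{2} - \Delta t)$ is controlled. This is where the stronger hypothesis $b_i, \sigma_i \in C^2_b(\mathbb R)$ enters decisively: boundedness of $\sigma_i'$ (and $\sigma_i''$) ensures that this extra term, as well as its It\^o expansion, has finite moments of all orders, so that the Milstein-specific parts of $H_{\hat p}$ are satisfied with no further work. Once $H_{\hat p}$ is verified with $\hat p = 6$, Theorem 2.1 of \cite{LN} closes the argument and delivers the claimed rate.
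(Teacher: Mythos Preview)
Your proposal is correct and follows exactly the route the paper takes: the paper simply records that the result is ``straightaway from Theorem \ref{bd1.2} and Theorem 2.1 in \cite{LN}'', and you have merely spelled out how Theorem \ref{bd1.2} furnishes the negative-moment, polynomial-moment, and H\"older-regularity inputs needed to verify Hypothesis $H_{\hat p}$ with $\hat p=6$. The only cosmetic point is that your displayed inequality should be strict ($>6$ rather than $\geq 6$) to match the strict condition $p<\tfrac{1}{6}(2\lambda/\|\sigma\|_\infty^{2}-1)$ in Theorem \ref{bd1.2}, which the standing assumption $\lambda>\tfrac{37}{2}\|\sigma\|_\infty^{2}$ indeed delivers.
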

%	\subsection{Prove the support result}

%	\begin{bd}\label{dangthuc}
%	$$\sum\limits_{d \geq m>k \geq 1} \sum\limits_{i \neq m, k}\left\{\dfrac{1}{\left(x_{m}-x_{i}\right)\left(x_{m}-x_{k}\right)}-\dfrac{1}{\left(x_{k}-x_{i}\right)\left(x_{m}-x_{k}\right)}\right\}=0$$
%	 for all $x \in \Delta_{d}$
%	\end{bd}
%	\begin{proof}
%		$$
%		\begin{aligned}
%			A=\sum_{d \geq m>k \geq 1} & \sum_{i \neq m, k}\left\{\frac{1}{\left(x_{m}-x_{i}\right)\left(x_{m}-x_{k}\right)}-\frac{1}{\left(x_{k}-x_{i}\right)\left(x_{m}-x_{k}\right)}\right\} \\
	%		&=\sum_{d \geq m>k \geq 1} \sum_{i \neq m, k} \frac{-1}{\left(x_{m}-x_{i}\right)\left(x_{k}-x_{i}\right)}.
	%	\end{aligned}
	%	$$
%		Swapping $m$ and $k$ does not affect to the value of A, so we obtain:
%		$$
%		-2 A=\sum_{i \neq m \neq k} \frac{1}{\left(x_{m}-x_{i}\right)\left(x_{k}-x_{i}\right)}=0.
%		$$
%		Note that for all $(i, k, m)$ are unequal, we have\\ $\frac{1}{\left(x_{m}-x_{i}\right)\left(x_{k}-x_{i}\right)}+\frac{1}{\left(x_{m}-x_{k}\right)\left(x_{i}-x_{k}\right)}+\frac{1}{\left(x_{i}-x_{m}\right)\left(x_{k}-x_{m}\right)}=0.$
		
%	\end{proof}

	\newpage


\begin{thebibliography}{99}
		
		\bibitem{AGZ10}
		{Anderson, G.~W.}, {Guionnet, A.} and {Zeitouni, O.}
		{\it An Introduction to Random Matrices.}
		volume 118 of {\em Cambridge Studies in Advanced Mathematics}.
		Cambridge University Press, Cambridge,
		(2010).
		
		%	\bibitem{AnKr17}
		%	{Andersson, A.} and {Kruse, R.}
		%	{Mean-square convergence of the BDF2-Maruyama and backward Euler schemes for SDE satisfying a global monotonicity condition}.
		%	{\it BIT}
		%	{\bf 57}(1)
		%	21--53
		%	(2017).
		%	
		%	\bibitem{Al}
		%	{Alfonsi, A.}
		%	{Strong order one convergence of a drift implicit Euler scheme: Application to the CIR process.}
		%	{\it Statist. Probab. Lett.}
		%	{\bf 83}(2)
		%	602--607
		%	(2013).
		%	
		%	\bibitem{BaFo1}
		%	{Baker, T.~H.} and {Forrester, P.~J.}
		%	{The Calogero-Sutherland model and generalized classical polynomials.}
		%	{\it Comm. Math. Phys.}
		%	{\bf 188}
		%	175--216
		%	(1997).
		%	
		%%	\bibitem{BaFo2}
		%%	{Baker, T.~H.} and {Forrester, P.~J.}
		%%	{The Calogero-Sutherland model and polynomials with prescribed symmetry.} Nucl. {\it Phys. B}
		%%	{\bf 492}
		%%	682--716
		%%	(1997).
		%%	
		%	\bibitem{BaFo3}
		%	{Baker, T.~H.} and {Forrester, P.~J.}
		%	{Non-symmetric Jack polynomials and integral kernels.}
		%	{\it Duke Math.}
		%	{\bf 95}
		%	1--50
		%	(1998).
		%	
		%	\bibitem{BeBoDi}
		%	{Berkaoui, A.}, {Bossy, M.}, and {Diop, A.}
		%	{Euler scheme for SDEs with non-Lipschitz diffusion coefficient: strong convergence,} {\it ESAIM Probab. Stat.}
		%	{\bf 12}
		%	1--11
		%	(2008).
		%	
		%	\bibitem{BoDi}
		%	{Bossy, M.} and {Diop. A.}
		%	{An efficient discretisation scheme for one dimensional SDEs with a diffusion coefficient function of the form $|x|^{\alpha}$, $\alpha \in [1/2,1)$,}
		%	RR-5396
		%	INRIA
		%	D\'ecembre
		%	(2004).
		%	
		%	\bibitem{Bia95}
		%	{Biane, P.}
		%	{Permutation model for semi-circular systems and quantum random walks.}
		%	{\it Pacific J. Math.}
		%	{\bf 171}(2),
		%	373--387,
		%	(1995).
		%	
		\bibitem{Bru90}
		{Bru, M.~F.}
		{Wishart processes.}
		{\it J. Theoret. Probab.}
		{\bf 4}(4)
		725--751
		(1991).
		
		\bibitem{CeLe97}
		{C\'epa, E.} and {L\'epingle, D.}
		{Diffusing particles with electrostatic repulsion.}
		{\it Probab. Theory Related Fields}
		{\bf 107}(4)
		429--449
		(1997).
		
		\bibitem{CeLe01}
		{C\'epa, E.} and {L\'epingle, D.}
		{Brownian particles with electrostatic repulsion on the circle:Dyson's model for unitary random matrices revisited.}
		{\it ESAIM Probab. Statist.}
		{\bf 5}
		203--224
		(2001).
		
		%	\bibitem{ChJaMi16}
		%	{Chassagneux, J.~F.}, {Jacquier, A.} and {Mihaylov, I.}
		%	{An explicit Euler scheme with strong rate of convergence for financial SDEs with non-Lipschitz coefficients.}
		%	{\it SIAM J. Financial Math.}
		%	{\bf 7}(1)
		%	993--1021
		%	(2016).
		%	
		%	\bibitem{Ch06}
		%	{Chybiryakov, O.}
		%	{\it Processus de {D}unkl et relation de {L}amperti}.
		%	PhD thesis, University Paris 6
		%	(2006).
		%	
		%	\bibitem{ChDeGaRoVoYo08}
		%	{Chybiryakov, O.}, {Demni, N.}, {Gallardo, L.}, {R{\"o}sler, M.}, {Voit, M.} and {Yor, M.}
		%	{\it Harmonic and Stochastic Analysis of Dunkl Processes}.
		%	Hermann
		%	(2008).
		
		%	\bibitem{CoLiOs07}
		%	{Cox, D.}, {Little, J.} and {O'shea, D.}
		%	{\it Ideals, Varieties, and Algorithms, Third Edition}.
		%	Springer
		%	(2007).
		%	
		%	\bibitem{De08}
		%	{Demni, N.}
		%	{Generalized Bessel function of Type $D$}.
		%	{\it SIGMA Symmetry Integrability Geom. Methods Appl.}
		%	{\bf 4}(075)
		%	(2008).
		%	
		%	\bibitem{De02}
		%	{Demni, N.}
		%	{The Laguerre process and generalized Hartman–Watson law.}
		%	{\it Bernoulli}
		%	{\bf 13}(2)
		%	556--580
		%	(2007).
		%	
		%	\bibitem{De09}
		%	{Demni, N.}
		%	{\it Radial Dunkl processes: existence, uniqueness and hitting time.}
		%	{\it C. R. Math. Acad. Sci. Paris, Ser. I}
		%	{\bf 347}
		%	1125--1128
		%	%arXiv:0707.0367v1.
		%	(2009).
		%	
		%	%\bibitem{DeOV}
		%	%{Demni, N.}
		%	%{\it Dunkl operators : an overview}.
		%	
		%	\bibitem{DeNeSz}
		%	{Dereich, S.}, {Neuenkirch, A.} and {Szpruch, L.}
		%	{An Euler-type method for the strong approximation for the Cox-Ingersoll-Ross process.}
		%	{\it Proc. R. Soc. A}
		%	{\bf 468}
		%	1105--1115
		%	(2012).
		%	
		%	\bibitem{vDi}
		%	{van Diejen, J.~F.}
		%	{Confluent hypergeometric orthogonal polynomials related to the rational quantum Calogero system with harmonic confinement.}
		%	{\it Comm. Math. Phys}
		%	{\bf 188}
		%	467--497
		%	(1997).
		%	
		%	\bibitem{vDiVi}
		%	{van Diejen, J.~F.} and {Vinet, L.}
		%	{\it Calogero-Sutherland-Moser Models. CRM.}
		%	Springer
		%	(2000).
		%	
		%	\bibitem{Du89}
		%	{Dunkl, C.}
		%	{Differential-difference operators associated to reflection groups.}
		%	{\it Trans. Amer. Math. Soc.}
		%	{\bf 311}(1)
		%	167--183
		%	(1989).
		%	
		%	\bibitem{Du91}
		%	{Dunkl, C.}
		%	{Integral kernels with reflection group invariance.}
		%	{\it Can. J. Math.}
		%	{\bf 43}(6)
		%	1214--1227
		%	(1991).
		%	
		%	\bibitem{DuXu}
		%	{Dunkl, C.} and {Xu, Y.}
		%	{\it Orthogonal Polynomials of Several Variables, Second Edition.}
		%	Cambridge university press
		%	(2001).
		
		\bibitem{Dyson}
		{Dyson, F.~J.}
		{\it A Brownian-motion model for the eigenvalues of a random matrix.}
		{\it J. Math. Phys.}
		{\bf 3}(6)
		1191--1198
		(1962).
		
		%	\bibitem{GaYo05}
		%	{Gallardo, L.} and {Yor, M.}
		%	{Some new examples of Markov processes which enjoy the time-inversion property.}
		%	{\it Probab. Theory Relat. Fields}
		%	{\bf 132}
		%	150--162
		%	(2005).
		%	
		%	\bibitem{Gi08}
		%	{Giles, M.~B.}
		%	{Multilevel monte carlo path simulation.}
		%	{\it Oper. Res.}
		%	{\bf 56}(3)
		%	607--617
		%	(2008).
		%	
		%	\bibitem{Gra99}
		%	{Grabiner, D.~J.}
		%	{Brownian motion in a {W}eyl chamber, non-colliding particles, and random matrices.}
		%	{\it Ann. Inst. H. Poincar{\'e} Probab. Statist.}
		%	{\bf 35}(2)
		%	177--204
		%	1999.
		%	
		%	\bibitem{GrMa13}
		%	{Graczyk, P.} and {Ma\l ecki, J.}
		%	{Multidimensional Yamada-Watanabe theorem and its applications to particle systems.}
		%	{\it J. Math. Phys.}
		%	{\bf 54}(2)
		%	021503
		%	(2013).
		
		\bibitem{GrMa14}
		{Graczyk, P.} and {Ma\l ecki, J.}
		{Strong solutions of non-colliding particle systems.}
		{\it Electron. J. Probab.}
		{\bf 19}(119)
		1--21
		(2014).
		
		%	\bibitem{EHairer}
		%	{Hairer, E.} and {Wanner, G.}
		%	{Solving Ordinary Differential Equations II:  Stiff and Differential-algebraic Problems.}
		%	Springer
		%	(1991).
		%	
		%	\bibitem{He}
		%	{Heckman, G.~J.}
		%	{A remark on the Dunkl differential-difference operators, in Harmonic analysis on reductive groups.}
		%	{\it Progress in Math.}
		%	{\bf 101}
		%	181--191
		%	(1991).
		%	
		%	\bibitem{HeSc}
		%	{Heckman, G.~J.} and {Schlichtkrull, H.}
		%	{Harmonic analysis and special functions on symmetric spaces.}
		%	Perspectives in Mathematics
		%	{\bf 16}
		%	Academic Press
		%	(1994).
		%	
		%	\bibitem{HMS}
		%	{Higham,~D.~J.}; {Mao,~X.} and {Stuart,~A.~M.}
		%	{Strong convergence of Euler-type methods for nonlinear stochastic differential equations.}
		%	{\it SIAM J. Numer. Anal.}
		%	{\bf 40}(3)
		%	1041--1063
		%	(2002).
		%	
		%	\bibitem{Hu96}
		%	{Hu, Y.}
		%	{Semi-implicit Euler--Maruyama scheme for stiff stochastic equations.}
		%	Stochastic Analysis and Related Topics V:
		%	The Silvri Workshop, Progr. Probab. 38, H. Koerezlioglu,
		%	Birkhauser, Boston
		%	183--202
		%	(1996).
		
		%	\bibitem{Hum12_2}
		%	{Humphreys, J.~E.}
		%	{\it Introduction to Lie Algebras and Representation Theory.}
		%	Springer
		%	(1972).
		%	
		%	\bibitem{Hum12_1}
		%	{Humphreys, J.~E.}
		%	{\it Reflection Groups and Coxeter Groups.}
		%	Cambridge university press
		%	(1992).
		
		%	\bibitem{HuJeKl11}
		%	{Hutzenthalerm M.}, {Jentzen, A.} and {Kloeden, P.~E.}
		%	{Strong and weak divergence in finite time of Euler's method for stochastic differential equations with non-globally Lipschitz continuous coefficients.}
		%	{\it Proc. R. Soc. A}
		%	{\bf 467}
		%	1563--1576
		%	(2011).
		%	
		%	\bibitem{HuJeKl12}
		%	{Hutzenthalerm M.}, {Jentzen, A.} and {Kloeden, P.~E.}
		%	{Strong convergence of an explicit numerical method for SDEs with non-globally Lipschitz continuous coefficients.}
		%	{\it Ann. Appl. Probab.}
		%	{\bf 22}(4)
		%	1611--1641
		%	(2012).
		
		%\bibitem{Inukai}
		%{Inukai, K.}
		%{\it Collision or non-collision problem for interacting Brownian particles.}
		%{\it Proc. Japan Acad. Ser. A Math. Sci.}
		%{\bf 82}(4),
		%66--70,
		%(2006).
		
		%	\bibitem{Kakei}
		%	{Kakei, S.}
		%	{Common algebraic structure for the Calogero-Sutherland models.}
		%	{\it J. Phys. A}
		%	{\bf 29}
		%	619--624
		%	(1996).
		%	
		%	\bibitem{Kaneko93}
		%	{Kaneko, J.}
		%	{Selberg Integrals and Hypergeometric Functions Associated with Jack Polynomials}.
		%	{\it SIAM J. Math. Anal.}
		%	{\bf 24}
		%	1086--1110
		%	(1993).
		%	
		%	\bibitem{KS}
		%	{Karatzas, I.}  and  {Shreve, S.~E.}
		%	{\it Brownian Motion and stochastic Calculus, Second Edition.}
		%	Springer
		%	(1991).
		
		\bibitem{Ka15}
		{Katori, M.}
		{\it Bessel Processes, {S}chramm-{L}oewner Evolution, and the {D}yson model.}
		Springer
		(2015).
		
		\bibitem{KaTa04}
		{Katori, M.} and {Tanemura, H.}
		{Symmetry of matrix-valued stochastic processes and noncolliding diffusion particle systems.}
		{\it J. Math. Phys.}
		{\bf 45}(8)
		3058--3085
		(2004).
		
		\bibitem{KaTa11}
		{Katori, M.} and {Tanemura, H.}
		{Noncolliding squared Bessel processes.}
		{\it J. Stat. Phys.}
		{\bf 142}(3)
		592--615
		(2011).
		
		%	\bibitem{Ki97}
		%	{Kirillov A.~A.}
		%	{Lectures on affine Hecke algebras and Macdonald’s conjectures.}
		%	{\it Bull. Amer> Math. Soc.}
		%	{\bf 34}(3)
		%	251--292
		%	(1997).
		%	
		%	\bibitem{KP}
		%	{Kloeden, P.} and  {Platen, E.}
		%	{\it Numerical Solution of Stochastic Differential Equations.}
		%	Springer
		%	(1995).
		
		%	\bibitem{KoOc01}
		%	{K\"onig, W.} and {O'connell, N.}
		%	{Eigenvalues of the laguerre process as non-colliding squared Bessel processes.}
		%	{\it Elect. Comm. in Probab.}
		%	{\bf 6}
		%	107--114
		%	(2001).
		%	
		%	\bibitem{LaVi}
		%	{Lapointe, L.} and {Vinet, L.}
		%	{Exact Operator Solution of the Calogero-Sutherland Model.}
		%	{\it Comm. Math. Phys.}
		%	{\bf 178}
		%	425--452
		%	(1996).
		
		\bibitem{LN}
		Luong, D-T., and Ngo, H-L., Semi-implicit Milstein approximation scheme for non-colliding particle systems. \emph{Calcolo} 56(3), 1--18 (2019).
		
		\bibitem{LiMe}
		{Li, X.~H.} and {Menon, G.}
		{ Numerical solution of Dyson Brownian motion and a sampling scheme for invariant matrix ensembles.}
		{\it J. Stat. Phys.}
		{\bf 153}(5)
		801–-812
		(2013).
		
		%\bibitem{Ma55}
		%{Maruyama, G.}
		%{Continuous Markov processes and stochastic equations.}
		%{\it Rend. Circ. Matem. Palermo}
		%{\bf 10}
		%48--90
		%(1955).
		
		%	\bibitem{Mar54}
		%	{Maruyama, G.}
		%	{On the transition probability functions of the Markov process.}
		%	{\it Nat. Sci. Rep. Ochanomizu Univ.}
		%	{\bf 5}
		%	10--20
		%	(1954).
		%	
		%	\bibitem{MiPeFi}
		%	{Mitrinovi\'c, D.~S.}, {Pe\v ari\'c, J.~E.} and {Fink, A.~M.}
		%	{Inequalities involving functions and their integrals and derivatives.}
		%	Kluwer Academic Publishers
		%	(1994).
		%	
		%	\bibitem{Mo75}
		%	{Moser, J.}
		%	{Three integrable Hamiltonian systems connected with isospectral deformations.} {\it Adv. in Math.}
		%	{\bf 16}
		%	197--220
		%	(1975).
		
		%	\bibitem{NeSz}
		%	{Neuenkirch, A.} and {Szpruch, L.}
		%	{First order strong approximations of scalar SDEs defined in a domain.}
		%	{\it Numer. Math.}
		%	{\bf 128}
		%	103--136
		%	(2014).
		
		%\bibitem{NT_MCOM}
		%{Ngo, H.-L. and Taguchi, D.}
		%{Strong rate of convergence for the Euler--Maruyama approximation of stochastic differential equations with irregular coefficients.}
		%Math. Comp.
		%{\bf 85}(300)
		%1793--1819
		%(2016).
		
		\bibitem{NT17}
		{Ngo, H.-L.} and {Taguchi, D.}
		{Semi-implicit Euler--Maruyama approximation for non-colliding particle systems.}
		{\it Ann. Appl. Probab.}
		{\bf 30}(2)
		673--705
		(2020).
		
		%	\bibitem{ReYo99}
		%	{Revuz, D.} and {Yor, M.}
		%	{Continuous Martingales and {B}rownian Motion, Third Edition.}
		%	Springer
		%	(1999).
		%	
		%	\bibitem{RoSh}
		%	{Rogers, L.~C.~G.} and {Shi, Z.}
		%	{Interacting Brownian particles and the Wigner law.}
		%	{\it Probab. Theory Related Fields.}
		%	{\bf 95}(4)
		%	555--570
		%	(1993).
		%	
		%	\bibitem{Ro98}
		%	{R{\"o}sler, M.}
		%	{Generalized Hermite polynomials and the heat equation for Dunkl operators.}
		%	{\it Commun. Math. Phys}
		%	{\bf 192}
		%	519--541
		%	(1998).
		%	
		%	\bibitem{Ro03}
		%	{R{\"o}sler, M.}
		%	{Dunkl operators: theory and applications.}
		%	in ``Orthogonal polynomials and special functions",
		%	Lecture Notes in Math.
		%	{\bf 1817}(Springer, Berlin, Heidelberg)
		%	93--135
		%	(2003).
		%	
		%	\bibitem{RoVo98}
		%	{R{\"o}sler, M.} and {Voit, M.}
		%	{Markov processes related with Dunkl operators.}
		%	{\it Adv,  Appl. Math.}
		%	{\bf 21}(4)
		%	575--643
		%	(1998).
		
		%	\bibitem{Sa13}
		%	{Sabanis, S.}
		%	{A note on tamed Euler approximations.}
		%	{\it Electron. Commun. Probab.}
		%	{\bf 18}(47)
		%	1--10
		%	(2013).
		%	
		%	\bibitem{Sa16}
		%	{Sabanis, S.}
		%	{Euler approximations with varying coefficients: the case of superlinearly growing diffusion coefficients.}
		%	{\it Ann. Appl. Probab.}
		%	{\bf 25}(4)
		%	2083--2105
		%	(2016).
		
		%	\bibitem{Sc07}
		%	{Schapira, B.}
		%	{The Heckman--Opdam Markov processes.}
		%	{\it Probab. Theory Related Fields}
		%	{\bf 138}(3--4)
		%	495--519
		%	(2007).
		%	
		%	\bibitem{Sk65}
		%	{Skorokhod, A.~V.}
		%	{\it Studies in the Theory of Random Processes.}
		%	Addison-Wesley
		%	(1965).
		%	
		%	\bibitem{Sta89}
		%	{Stanley, R.~P.}
		%	{Some combinatorial properties of Jack symmetric functions.}
		%	{\it Adv. Math.}
		%	{\bf 7}
		%	76--115
		%	(1989).
		
		%	\bibitem{Ve81}
		%	{Veretennikov, A.~Yu.}
		%	{On strong solution and explicit formulas for solutions of stochastic integral equations.}
		%	{\it Math. USSR Sb.}
		%	{\bf 39}
		%	387--403
		%	(1981).
		%	
		%	\bibitem{YaWa}
		%	{Yamada, T. and Watanabe, S.}
		%	{On the uniqueness of solutions of stochastic differential equations.}
		%	{\it J. Math. Kyoto Univ.}
		%	{\bf 11}
		%	155--167
		%	(1971).
		
		%	\bibitem{Yor80}
		%	{Yor, M.}
		%	{Loi de l'indice du lacet brownien, et distribution de {H}artman-{W}atson.}
		%	{\it Z. Wahrsch. Verw. Gebiete}
		%	{\bf 53}(1)
		%	71--95
		%	(1980).
		%	
		%	\bibitem{Zv}
		%	{Zvonkin, A.~K.}
		%	{A transformation of the phase space of a diffusion process that removes the drift.}
		%	{\it Math. USSR Sbornik}
		%	{\bf 22}
		%	129--148
		%	(1974).
		
	\end{thebibliography}
\end{document}